\providecommand{\U}[1]{\protect\rule{.1in}{.1in}}
\providecommand{\U}[1]{\protect\rule{.1in}{.1in}}
\providecommand{\U}[1]{\protect\rule{.1in}{.1in}}
\DeclareMathAlphabet{\eusm}{OT1}{eusm}{m}{n}
\newtheorem{theorem}{Theorem}[section]
\newtheorem{prop}[theorem]{Proposition}
\newtheorem{example}[theorem]{Example}
\newtheorem{remark}[theorem]{Remark}
\newtheorem{question}[theorem]{Question}
\tikzstyle{vertex}=[circle, draw, fill=black, inner sep=0pt, minimum size=6pt]
\begin{document}
\title[Leavitt path algebras with bounded index of nilpotence]{Leavitt path algebras with bounded index of nilpotence}
\subjclass[2010]{16D50, 16D60.}
\keywords{Leavitt path algebras, bounded index of nilpotence, direct-finiteness, simple
modules, injective modules}
\author{Kulumani M. Rangaswamy}
\address{Departament of Mathematics, University of Colorado at Colorado Springs,
Colorado-80918, USA}
\email{krangasw@uccs.edu}
\author{Ashish K. Srivastava}
\address{Department of Mathematics and Statistics, St. Louis University, St. Louis,
MO-63103, USA}
\email{ashish.srivastava@slu.edu}
\thanks{The work of the second author is partially supported by a grant from Simons
Foundation (grant number 426367).}
\maketitle

\begin{abstract}
In this paper we completely describe graphically Leavitt path algebras with
bounded index of nilpotence. We show that the Leavitt path algebra $L_{K}(E)$
has index of nilpotence at most $n$ if and only if no cycle in the graph $E$
has an exit and there is a fixed positive integer $n$ such that the number of
distinct paths that end at any given vertex $v$ (including $v$, but not
including the entire cycle $c$ in case $v$ lies on $c$) is less than or equal
to $n$. Interestingly, the Leavitt path algebras having bounded index of
nilpotence turn out to be precisely those that satisfy a polynomial identity.
Furthermore, Leavitt path algebras with bounded index of nilpotence are shown
to be directly-finite and to be $\mathbb{Z}$-graded $\Sigma$-$V$ rings. As an
application of our results, we answer an open question raised in \cite{JST}
whether an exchange $\Sigma$-$V$ ring has bounded index of nilpotence.

\end{abstract}

\bigskip

\bigskip

\section{Introduction}

\noindent The objective of this paper is to characterize Leavitt path algebras
with bounded index of nilpotence. A ring $R$ is said to have \textit{bounded
index of} \textit{nilpotence} if there is a positive integer $n$ such that
$x^{n}=0$ for all nilpotent elements $x$ in $R$. If $n$ is the least such
integer then $R$ is said to have \textit{index of nilpotence $n$}. We show
that the Leavitt path algebra $L:=L_{K}(E)$ of a directed graph $E$ over a
field $K$ has index of nilpotence at most $n$ if and only if no cycle in the
graph $E$ has an exit and there is a fixed positive integer $n$ such that the
number of distinct paths that end at any given vertex $v$ (including $v$, but
not including the cycle $c$ in case $v$ lies on $c$) is less than or equal to
$n$. In this case, $L$ becomes a subdirect product of matrix rings $M_{t}(K)$
or $M_{t}(K[x,x^{-1}])$ of finite order $t\leq n$. Examples are constructed
showing that $L$ need not decompose as a direct sum of these matrix rings
$M_{t}(K)$ or $M_{t}(K[x,x^{-1}])$, though the decomposition is possible when
$E$ is row-finite. We show that a Leavitt path algebra $L$ with bounded index
of nilpotence is always directly-finite and that $L$ is a $%
%TCIMACRO{\U{2124} }%
%BeginExpansion
\mathbb{Z}
%EndExpansion
$-graded $\Sigma$-$V$ ring, that is, each graded simple left/right $L$-module
is graded $\Sigma$-injective. Examples show that the converse of these
statements do not hold. Interestingly, it turns out that the graphical
conditions on $E$ that ensure $L$ has a bounded index of nilpotence are
exactly the same graphical conditions on $E$ that were shown in \cite{BLR} to
imply that $L$ satisfies a polynomial identity. When $E$ is a finite graph,
these graphical conditions also imply that $L$ has GK-dimension $\leq1$. Such
statements illustrate a unique phenomenon in the study of Leavitt path
algebras where a single graph property of $E$ often implies different
ring-theoretic properties for $L$ and these ring-theoretic properties are
usually independent of each other for general rings (see \cite{R1} for several
illustrations of this phenomenon of Leavitt path algebras). This feature of
Leavitt path algebras makes them really useful tools in constructing examples
of rings of various desired ring-theoretic properties. Finally, as an
application of our results, we answer a question raised in \cite{JST}, whether
an exchange $\Sigma$-$V$ ring has bounded index of nilpotence.

For the general notation, terminology and results in Leavitt path algebras, we
refer the reader to \cite{AArS}.\ We give below an outline of some of the
needed basic concepts and results.

A (directed) graph $E=(E^{0},E^{1},r,s)$ consists of two sets $E^{0}$ and
$E^{1}$ together with maps $r,s:E^{1}\rightarrow E^{0}$. The elements of
$E^{0}$ are called \textit{vertices} and the elements of $E^{1}$
\textit{edges}.

A vertex $v$ is called a \textit{sink} if it emits no edges and a vertex $v$
is called a \textit{regular} \textit{vertex} if it emits a non-empty finite
set of edges. An \textit{infinite emitter} is a vertex which emits infinitely
many edges. For each $e\in E^{1}$, we call $e^{\ast}$ a ghost edge. We let
$r(e^{\ast})$ denote $s(e)$, and we let $s(e^{\ast})$ denote $r(e)$.
A\textit{\ path} $\mu$ of length $n>0$ is a finite sequence of edges
$\mu=e_{1}e_{2}\cdot\cdot\cdot e_{n}$ with $r(e_{i})=s(e_{i+1})$ for all
$i=1,\cdot\cdot\cdot,n-1$. In this case $\mu^{\ast}=e_{n}^{\ast}\cdot
\cdot\cdot e_{2}^{\ast}e_{1}^{\ast}$ is the corresponding ghost path. A vertex
is considered a path of length $0$.

A path $\mu$ $=e_{1}\dots e_{n}$ in $E$ is \textit{closed} if $r(e_{n}%
)=s(e_{1})$, in which case $\mu$ is said to be \textit{based at the vertex
}$s(e_{1})$. A closed path $\mu$ as above is called \textit{simple} provided
it does not pass through its base more than once, i.e., $s(e_{i})\neq
s(e_{1})$ for all $i=2,...,n$. The closed path $\mu$ is called a
\textit{cycle} if it does not pass through any of its vertices twice, that is,
if $s(e_{i})\neq s(e_{j})$ for every $i\neq j$. An \textit{exit }for a path
$\mu=e_{1}\dots e_{n}$ is an edge $e$ such that $s(e)=s(e_{i})$ for some $i$
and $e\neq e_{i}$.

An \textit{infinite rational path} $p$ is an infinite path of the form
$p=x_{1}x_{2} \ldots x_{n} \ldots$ where there is an $m \geq1$ such that
$x_{k} = g$, a fixed closed path for all $k \geq m$ and that $x_{k}$ is an
edge $e_{k}$ if $k <m$. Thus $p$ will be of the form $p=e_{1}e_{2}\cdot
\cdot\cdot e_{m-1}gggg\cdot\cdot\cdot$ where $g$ is a closed path and the
$e_{i}$ are edges. An infinite path which is not rational is called an
\textit{irrational path}.

A graph $E$ is said to satisfy \textit{Condition (K)}, if every vertex $v$ on
a closed path $c$ is also the base of a another closed path $c^{\prime}%
$different from $c$. A graph $E$ is said to satisfy \textit{Condition (L)} if
every cycle has an exit.

If there is a path from vertex $u$ to a vertex $v$, we write $u\geq v$. A
subset $D$ of vertices is said to be \textit{downward directed }\ if for any
$u,v\in D$, there exists a $w\in D$ such that $u\geq w$ and $v\geq w$. A
subset $H$ of $E^{0}$ is called \textit{hereditary} if, whenever $v\in H$ and
$w\in E^{0}$ satisfy $v\geq w$, then $w\in H$. A hereditary set is
\textit{saturated} if, for any regular vertex $v$, $r(s^{-1}(v))\subseteq H$
implies $v\in H$.

Given an arbitrary graph $E$ and a field $K$, the \textit{Leavitt path algebra
}$L_{K}(E)$ is defined to be the $K$-algebra generated by a set $\{v:v\in
E^{0}\}$ of pair-wise orthogonal idempotents together with a set of variables
$\{e,e^{\ast}:e\in E^{1}\}$ which satisfy the following conditions:

(1) \ $s(e)e=e=er(e)$ for all $e\in E^{1}$.

(2) $r(e)e^{\ast}=e^{\ast}=e^{\ast}s(e)$\ for all $e\in E^{1}$.

(3) (The ``CK-1 relations") For all $e,f\in E^{1}$, $e^{\ast}e=r(e)$ and
$e^{\ast}f=0$ if $e\neq f$.

(4) (The ``CK-2 relations") For every regular vertex $v\in E^{0}$,
\[
v=\sum_{e\in E^{1},s(e)=v}ee^{\ast}.
\]
Every Leavitt path algebra $L_{K}(E)$ is a $\mathbb{Z}$\textit{-graded
algebra}, namely, $L_{K}(E)={\displaystyle\bigoplus\limits_{n\in\mathbb{Z}}}
L_{n}$ induced by defining, for all $v\in E^{0}$ and $e\in E^{1}$, $\deg
(v)=0$, $\deg(e)=1$, $\deg(e^{\ast})=-1$. Here the $L_{n}$ are abelian
subgroups satisfying $L_{m}L_{n}\subseteq L_{m+n}$ for all $m,n\in\mathbb{Z}
$. Further, for each $n\in\mathbb{Z}$, the \textit{homogeneous component
}$L_{n}$ is given by
\[
L_{n}=\{ {\textstyle\sum} k_{i}\alpha_{i}\beta_{i}^{\ast}\in L:\text{ }%
|\alpha_{i}|-|\beta_{i}|=n\}.
\]
Elements of $L_{n}$ are called \textit{homogeneous elements}. An ideal $I$ of
$L_{K}(E)$ is said to be a \textit{graded ideal} if $I=$
${\displaystyle\bigoplus\limits_{n\in\mathbb{Z}}} (I\cap L_{n})$. If $A,B$ are
graded modules over a graded ring $R$, we write $A\cong_{gr}B$ if $A$ and $B$
are graded isomorphic and we write $A\oplus_{gr}B$ to denote a graded direct
sum. We will also be using the usual grading of a matrix of finite order. For
this and for the various properties of graded rings and graded modules, we
refer to \cite{H-1}, \cite{HR} and \cite{NV}.

A \textit{breaking vertex }of a hereditary saturated subset $H$ is an infinite
emitter $w\in E^{0}\backslash H$ with the property that $0<|s^{-1}(w)\cap
r^{-1}(E^{0}\backslash H)|<\infty$. The set of all breaking vertices of $H$ is
denoted by $B_{H}$. For any $v\in B_{H}$, $v^{H}$ denotes the element
$v-\sum_{s(e)=v,r(e)\notin H}ee^{\ast}$. Given a hereditary saturated subset
$H$ and a subset $S\subseteq B_{H}$, $(H,S)$ is called an \textit{admissible
pair.} Given an admissible pair $(H,S)$, the ideal generated by $H\cup
\{v^{H}:v\in S\}$ is denoted by $I(H,S)$. It was shown in \cite{T} that the
graded ideals of $L_{K}(E)$ are precisely the ideals of the form $I(H,S)$ for
some admissible pair $(H,S)$. Moreover, $L_{K}(E)/I(H,S)\cong L_{K}%
(E\backslash(H,S))$. Here $E\backslash(H,S)$ is a \textit{Quotient graph of
}$E$ where $(E\backslash(H,S))^{0}=(E^{0}\backslash H)\cup\{v^{\prime}:v\in
B_{H}\backslash S\}$ and $(E\backslash(H,S))^{1}=\{e\in E^{1}:r(e)\notin
H\}\cup\{e^{\prime}:e\in E^{1} $ with $r(e)\in B_{H}\backslash S\}$ and $r,s$
are extended to $(E\backslash(H,S))^{0}$ by setting $s(e^{\prime})=s(e)$ and
$r(e^{\prime})=r(e)^{\prime}$. It is known (see \cite{R}) that if $P$ is a
prime ideal of $L$ with $P\cap E^{0}=H$, then $E^{0}\backslash H$ is downward directed.

Let $\Lambda$ be an infinite set and $R$, a ring. Then $M_{\Lambda}(R)$
denotes the ring of $\Lambda\times\Lambda$ matrices in which all except at
most finitely many entries are non-zero.

\bigskip

\section{Results}

\noindent In this section, we characterize Leavitt path algebras having
bounded index of nilpotence. We begin with the following useful proposition
some part of which might be implicit in earlier works on Leavitt path algebras.

\begin{prop}
\label{matrixCreation} Let $E$ be an arbitrary graph and let $L:=L_{K}(E)$.

\begin{enumerate}
[(a)]

\item Let $v$ be a vertex in $E$ which does not lie on a closed path. If, for
some $n\geq1$, there are $n$ distinct paths $p_{1},\cdot\cdot\cdot,p_{n}$ in
$E$ that end at $v$, then the set $T_{n}=\{{\displaystyle\sum\limits_{i=1}%
^{n}}{\displaystyle\sum\limits_{j=1}^{n}}k_{ij}p_{i}p_{j}^{\ast}:k_{ij}\in
K\}$ is a subring of $L$ isomorphic to the matrix ring $M_{n}(K)$.

\item Let $v$ be a vertex in $E$ lying on a cycle $c$ and let $f$ be an exit
for $c$ at $v$. Then, for every integer $n\geq1$, the subset

$S_{n}=\{{\displaystyle\sum\limits_{i=1}^{n}}$ ${\displaystyle\sum
\limits_{j=1}^{n}}k_{ij}c^{i}ff^{\ast}(c^{\ast})^{j}:k_{ij}\in K\}$ is a
subring of $L$ isomorphic to the matrix ring $M_{n}(K)$.

\item Let $v$ be a vertex lying on a cycle $c$ without exits in $E$. If, for
some $n\geq1$, there are $n$ distinct paths $p_{1},\cdot\cdot\cdot,p_{n}$ in
$E$ that end at $v$ and do not go through the entire cycle $c$, then again the
set $T_{n}=\{{\displaystyle\sum\limits_{i=1}^{n}}{\displaystyle\sum
\limits_{j=1}^{n}}k_{ij}p_{i}p_{j}^{\ast}:k_{ij}\in K\}$ is a subring of $L$
isomorphic to the matrix ring $M_{n}(K)$.
\end{enumerate}
\end{prop}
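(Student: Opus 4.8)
The plan is to verify directly that the $K$-linear map $\phi\colon M_n(K)\to L$ sending the matrix unit $E_{ij}$ to $p_ip_j^{\ast}$ is a well-defined injective ring homomorphism whose image is $T_n$; since $T_n$ is by definition the $K$-span of the elements $p_ip_j^{\ast}$, surjectivity onto $T_n$ is automatic once $\phi$ is shown to be a homomorphism, and then $T_n = \phi(M_n(K)) \cong M_n(K)$. The key identity to establish is the matrix-unit relation $(p_ip_i^{\ast})(p_kp_k^{\ast}) = \delta_{ik}\, p_ip_j^{\ast}$, more precisely $(p_ip_j^{\ast})(p_kp_l^{\ast}) = \delta_{jk}\, p_ip_l^{\ast}$. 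This reduces to computing $p_j^{\ast}p_k$ for two paths $p_j,p_k$ both ending at $v$: using the CK-1 relations $e^{\ast}e = r(e)$ and $e^{\ast}f = 0$ for $e\neq f$ repeatedly, one shows $p_j^{\ast}p_k = 0$ unless one of $p_j,p_k$ is an initial subpath of the other, in which case it equals the "leftover" ghost/real path.

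The crucial point in part (c) — and the reason the hypothesis "$p_i$ does not go through the entire cycle $c$" is needed — is to rule out exactly the degenerate case where $p_j = p_k\mu$ for some \emph{nontrivial closed} path $\mu$ based at $v$. Here is where I would invoke the structural hypotheses: $v$ lies on the cycle $c$, and $c$ has \emph{no exit} in $E$. The no-exit condition forces that the only edge emitted by each vertex on $c$ is the corresponding edge of $c$, so the only closed path based at $v$ is a power of $c$ itself; an initial subpath-relation $p_j = p_k\mu$ with $\mu$ closed would then mean $p_j$ goes through the entire cycle $c$, contradicting the hypothesis. Hence $p_j^{\ast}p_k \neq 0$ forces $p_j = p_k$ outright (so $j=k$), giving $p_j^{\ast}p_k = \delta_{jk}\, r(v) = \delta_{jk}\, v$, exactly as in part (a) where the hypothesis was instead "$v$ does not lie on a closed path." From $p_j^{\ast}p_k = \delta_{jk} v$ and $p_i v = p_i$, $v p_l^{\ast} = p_l^{\ast}$, the matrix-unit relations follow, and $K$-bilinearity gives that $\phi$ is a ring homomorphism.

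For injectivity: if $\sum_{i,j} k_{ij} p_ip_j^{\ast} = 0$ in $L$, then multiplying on the left by $p_a^{\ast}$ and on the right by $p_b$ and using $p_a^{\ast}p_i = \delta_{ai}v$, $p_j^{\ast}p_b = \delta_{jb}v$ collapses the sum to $k_{ab}\, v = 0$; since $v$ is a nonzero idempotent in $L$ (vertices are nonzero in any Leavitt path algebra over a field), $k_{ab} = 0$ for all $a,b$. Thus $\phi$ is injective and $T_n \cong M_n(K)$. The main obstacle is purely the bookkeeping in the first step — carefully justifying $p_j^{\ast}p_k = \delta_{jk}v$ by reducing to the no-exit / no-closed-path structure to exclude a closed leftover — and once that lemma-like computation is in hand the rest is routine linear algebra. (Parts (a) and (c) are essentially identical given their respective hypotheses; one can state the computation once and apply it in both cases.)
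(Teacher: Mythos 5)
Your treatment of parts (a) and (c) is correct and follows essentially the same route as the paper: you build the matrix units $\varepsilon_{ij}=p_ip_j^{\ast}$, reduce the verification to showing $p_j^{\ast}p_k=\delta_{jk}v$, and exclude the degenerate case $p_j=p_k\mu$ with $\mu$ a nontrivial closed path at $v$ by the hypothesis in (a) (no closed path through $v$) and by the no-exit hypothesis in (c) (any closed path at $v$ is a power of $c$, so $p_j$ would traverse all of $c$). You even go a bit further than the paper by writing out the linear-independence argument (multiply by $p_a^{\ast}$ on the left and $p_b$ on the right to isolate $k_{ab}v$), which the paper dismisses as ``readily seen.''

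The genuine gap is that part (b) is never proved: your closing parenthesis only claims that (a) and (c) can be handled by one computation, and nothing in the proposal addresses the case of a cycle $c$ \emph{with} an exit $f$ at $v$. There the matrix units are different, namely $\varepsilon_{ij}=c^{i}ff^{\ast}(c^{\ast})^{j}$, and the verification does not reduce to your lemma about two paths ending at $v$; instead one computes $\varepsilon_{ij}\varepsilon_{kl}=c^{i}ff^{\ast}(c^{\ast})^{j}c^{k}ff^{\ast}(c^{\ast})^{l}$ and uses the fact that $f$ is an exit at $v$: writing $c=e_{1}\cdots e_{m}$ with $s(e_{1})=v$, one has $f\neq e_{1}$ and $s(f)=s(e_{1})=v$, so $f^{\ast}e_{1}=0=e_{1}^{\ast}f$; hence $(c^{\ast})^{j}c^{k}$ collapses to $v$ when $j=k$ (giving $\varepsilon_{il}$ via $f^{\ast}f=r(f)$), and to a leftover power $c^{k-j}$ or $(c^{\ast})^{j-k}$ when $j\neq k$, which is annihilated by the adjacent $f^{\ast}$ or $f$. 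An analogous bordering argument (multiply by $(c^{a}f)^{\ast}$ and $c^{b}f$) gives independence. This is the same flavor of computation as in (a) and (c), but it is a distinct one with distinct hypotheses, and as written your proposal simply does not cover one of the three assertions of the proposition.
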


\begin{proof}
(a) First observe that $p_{j}^{\ast}p_{k}\neq0$ if and only if $p_{j}=p_{k}$.
Because, if $p_{j}^{\ast}p_{k}\neq0$, then either $p_{j}=p_{k}p^{\prime}$ or
$p_{k}=p_{j}q^{\prime}$ for some paths $p^{\prime},q^{\prime}$. Since
$r(p_{j})=r(p_{k})=v$, we get $s(p^{\prime})=v=r(p^{\prime})$ and
$s(q^{\prime})=v=r(q^{\prime})$. Since $v$ does not lie on a closed path, we
conclude that $p^{\prime}=v=q^{\prime}$. So $p_{j}=p_{k}$. Conversely, if
$p_{j}=p_{k}$, then clearly $p_{j}^{\ast}p_{k}=$ $p_{j}^{\ast}p_{j}=v\neq0$.
For all $i,j$, let $\varepsilon_{ij}=p_{i}p_{j}^{\ast}$. Clearly,
$(\varepsilon_{ii})^{2}=\varepsilon_{ii}$ and $\varepsilon_{ij}\varepsilon
_{kl}=\varepsilon_{il}$ or $0$ according as $j=k$ or not. Thus the
$\varepsilon_{ij}$ form a set of matrix units and it is readily seen that the
set  $T_{n}=\{{\displaystyle\sum\limits_{i=1}^{n}}{\displaystyle\sum
\limits_{j=1}^{n}}k_{ij}p_{i}p_{j}^{\ast}:k_{ij}\in K\}$ is a subring of $L$
isomorphic to the matrix ring $M_{n}(K)$.

(b) Suppose $c$ is a cycle in $E$ with an exit $f$ at a vertex $v$. Consider
the set $\{\varepsilon_{ij}=c^{i}ff^{\ast}(c^{\ast})^{j}:1\leq i,j\leq n\}$.
Clearly, the $\varepsilon_{ij}$ form a set of matrix units as $(\varepsilon
_{ii})^{2}=\varepsilon_{ii}$ and $\varepsilon_{ij}\varepsilon_{kl}%
=\varepsilon_{il}$ or $0$ according as $j=k$ or not. It is then easy to check
that the set $S_{n}=\{{\displaystyle\sum\limits_{i=1}^{n}}$
${\displaystyle\sum\limits_{j=1}^{n}}k_{ij}c^{i}ff^{\ast}(c^{\ast})^{j}%
:k_{ij}\in K\}$ is a subring of $L$ isomorphic to the matrix ring $M_{n}(K)$.

(c) Let $v$ be the base of a cycle $c$ without exits and $p_{1},\cdot
\cdot\cdot,p_{n}$ be $n$ distinct paths that end at $v$ and not go through the
entire cycle $c$. Using the fact that $c$ is a cycle without exits and
repeating the arguments as in (a), it follows that $p_{j}^{\ast}p_{k}\neq0$ if
and only if $p_{j}=p_{k}$. As before let $\varepsilon_{ij}=p_{i}p_{j}^{\ast}$
with $1\leq i,j\leq n$. Clearly, $(\varepsilon_{ii})^{2}=\varepsilon_{ii}$ and
$\varepsilon_{ij}\varepsilon_{kl}=\varepsilon_{il}$ or $0$ according as $j=k$
or not. Thus the $\varepsilon_{ij}$ form a set of matrix units and it is
readily seen that $T_{n}=\{{\displaystyle\sum\limits_{i=1}^{n}}%
{\displaystyle\sum\limits_{j=1}^{n}}k_{ij}p_{i}p_{j}^{\ast}:k_{ij}\in K\}$ is
a subring of $L$ isomorphic to the matrix ring $M_{n}(K)$.
\end{proof}

We are now ready to describe all the Leavitt path algebras with bounded index
of nilpotence. It is interesting to note that the Leavitt path algebras having
bounded index of nilpotence are precisely those that satisfy a polynomial identity.

Recall that an algebra $A$ over a field $K$ is said to satisfy a polynomial
identity if there exists a non-zero element $f$ in $K[x_{1},\ldots,x_{n}]$
such that $f(a_{1},\ldots,a_{n})=0$ for all $a_{i}$ in $A$. Clearly every
commutative ring satisfies a polynomial identity but there are many
interesting classes of noncommutative rings too that satisfy a polynomial
identity.\ For instance, the Amitsur-Levitzky theorem (see \cite{P}) states
that, for any $n\geq1$, the matrix ring $M_{n}(R)$ over a commutative ring $R$
satisfies a polynomial identity of degree $2n$. In \cite{BLR} it is shown
that the Leavitt path algebra $L_{K}(E)$ of an arbitrary graph $E$ over a
field $K$ satisfies a polynomial identity if and only if no cycle in $E$ has
an exit and there is a fixed positive integer $d$ such that the number of
distinct paths that end at any given vertex $v$ (including $v$, but not
including the entire cycle $c$ in case $v$ lies on $c$) is less than or equal
to $d$. When $E$ is a finite graph, then the Leavitt path algebra $L_{K}(E)$
satisfying a polynomial identity is known to be equivalent to the
Gelfand-Kirillov dimension of $L_{K}(E)$ being at most one \cite{BLR}.

\begin{theorem}
\label{bdd Index} Let $E$ be an arbitrary graph. Then the following properties
are equivalent for $L:=L_{K}(E)$:

\begin{enumerate}
\item $L$ has index of nilpotence less than or equal to $n$;

\item No cycle in $E$ has an exit and there is a fixed positive integer $n$
such that the number of distinct paths that end at any given vertex $v$
(including $v$, but not including the entire cycle $c$ in case $v$ lies on
$c$) is less than or equal to $n$;

\item For any graded prime ideal $P$ of $L$, $L/P\cong_{gr}M_{t}(K)$ or
$M_{t}(K[x,x^{-1}])$ where $t\leq n$ with appropriate matrix gradings;

\item $L$ is a graded subdirect product of graded rings $\{A_{i}:i\in I\}$
where, for each $i$, $A_{i}\cong_{gr}M_{t_{i}}(K)$ or $M_{t_{i}}(K[x,x^{-1}])
$ with appropriate matrix gradings where, for each $i$, $t_{i}\leq n$, a fixed
positive integer.

\item $L$ satisfies a polynomial identity.
\end{enumerate}
\end{theorem}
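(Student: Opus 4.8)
The plan is to prove the cycle of implications $(1)\Rightarrow(2)\Rightarrow(3)\Rightarrow(4)\Rightarrow(1)$, carrying the same integer $n$ throughout, and then to deduce~$(5)$ by combining $(4)\Rightarrow(5)$ with $(5)\Rightarrow(2)$; the latter is essentially the characterization of Leavitt path algebras satisfying a polynomial identity proved in \cite{BLR}. Three of the four implications in the cycle are short, and the one carrying the real content is $(2)\Rightarrow(3)$.

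For $(1)\Rightarrow(2)$ I argue contrapositively via Proposition~\ref{matrixCreation}. If some cycle in $E$ has an exit, part~(b) embeds $M_m(K)$ into $L$ for every $m\ge1$, and since $M_m(K)$ contains a nilpotent element of index exactly $m$, taking $m>n$ contradicts~(1). If no cycle has an exit but some vertex $v$ receives $n+1$ distinct paths of the type counted in~(2), then part~(a) (when $v$ lies on no closed path) or part~(c) (when $v$ lies on an exit-free cycle) embeds $M_{n+1}(K)$ into $L$, again contradicting~(1). For $(4)\Rightarrow(1)$: a graded subdirect embedding $L\hookrightarrow\prod_i A_i$ sends a nilpotent element of $L$ to a nilpotent element of each $A_i$, and since $K$ and $K[x,x^{-1}]$ are reduced commutative rings, a nilpotent matrix in $M_t$ over such a ring is killed by its $t$-th power; as $t\le n$, the element is killed by its $n$-th power in every $A_i$, hence in $L$. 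For $(3)\Rightarrow(4)$: $L_K(E)$ is semiprime, hence graded semiprime, so the intersection of its graded prime ideals is zero and $L$ is a graded subdirect product of the graded prime quotients $L/P$; by~(3) each of these is one of the listed matrix rings with parameter $\le n$, which is exactly~(4). Finally $(4)\Rightarrow(5)$ follows from the Amitsur--Levitzki theorem: each $A_i$, being $M_{t_i}$ over a commutative ring with $t_i\le n$, satisfies the standard identity $s_{2n}$ (the standard identity of degree $2n$ being a consequence of the one of degree $2t_i$), hence so does $L\subseteq\prod_iA_i$.

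The substance is $(2)\Rightarrow(3)$. Given a graded prime ideal $P$, write $P=I(H,S)$, so that $L/P\cong L_K(F)$ with $F=E\setminus(H,S)$; then $L_K(F)$ is graded prime and $F^0$ is downward directed (via the correspondence between graded ideals and hereditary saturated subsets, cf.\ \cite{R}). The hypotheses of~(2) descend to $F$: every path in $F$ ending at a vertex of $F$ corresponds to a path in $E$ ending at a vertex of $E$, so the relevant path counts do not increase, and passing to a quotient graph only introduces new sinks. One then shows that a downward-directed graph in which no cycle has an exit and in which at most $n$ of the relevant paths end at each vertex is necessarily finite and of one of two shapes: a finite acyclic graph with a single sink, or a finite graph consisting of one exit-free cycle together with finitely many finite trees feeding into it. Indeed, an infinite emitter or an in-tree of unbounded size would force more than $n$ paths to end at a common vertex, and downward directedness forces every vertex to reach either a unique sink or a unique exit-free cycle (two exit-free cycles, or a sink together with an exit-free cycle, cannot coexist in a downward-directed graph). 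In the first case $L_K(F)\cong_{gr}M_t(K)$ and in the second $L_K(F)\cong_{gr}M_t(K[x,x^{-1}])$, with the standard matrix gradings induced by path length, where $t$ is in each case the number of relevant paths ending at the distinguished vertex, so $t\le n$; the explicit graded isomorphisms are the classical ones recorded in \cite{AArS}.

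I expect $(2)\Rightarrow(3)$ to be the main obstacle, for three intertwined reasons: the combinatorial classification of downward-directed, exit-free, bounded-in-degree graphs just described; the bookkeeping needed to confirm that the graphical data of~(2) really does descend through the quotient construction, including through breaking vertices; and the verification that the matrix-ring isomorphisms for the two model graphs respect the $\mathbb{Z}$-grading. Once that is in place the remaining steps are routine, requiring only Proposition~\ref{matrixCreation}, the semiprimeness of Leavitt path algebras, the reducedness of $K[x,x^{-1}]$, the Amitsur--Levitzki theorem, and the cited characterization from \cite{BLR} for $(5)\Rightarrow(2)$.
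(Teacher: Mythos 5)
Your proposal is correct and follows essentially the same route as the paper: (1)$\Rightarrow$(2) by Proposition \ref{matrixCreation}, (2)$\Rightarrow$(3) by passing to the quotient graph $E\backslash(H,S)$, using downward directedness to isolate a unique sink or unique exit-free cycle and invoking the standard matrix-ring isomorphisms from \cite{AArS}, then (3)$\Rightarrow$(4)$\Rightarrow$(1), (4)$\Rightarrow$(5) via Amitsur--Levitzki, and (5)$\Rightarrow$(2) from \cite{BLR}. The only minor difference is in (3)$\Rightarrow$(4), where you deduce that the graded prime ideals intersect in zero from semiprimeness of $L_{K}(E)$, whereas the paper argues directly that a graded ideal maximal with respect to excluding a given vertex is graded prime; both justifications are standard and valid.
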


\begin{proof}
Assume (i), that is, assume that the index of nilpotence of $L$ is $\leq n$.
We claim that no cycle in $E$ can have an exit. Because, otherwise, by
Proposition \ref{matrixCreation} (b), $L$ will contain subrings of matrices of
arbitrary finite size and this will give rise to unbounded index of nilpotence
for $L$. Thus every vertex $v$ in $E$ either does not lie on a closed path or
lies on a cycle without exits. If there are more than $n$ distinct paths
ending at $v$, then again, by Proposition \ref{matrixCreation} (a) and (c),
$L$ will contain a copy of a matrix ring of order greater than $n$ over $K$
which will imply that the index of nilpotence of $L$ is greater than $n$, a
contradiction. This proves (ii).

Assume (ii). Let $P=I(H,S)$ be a graded prime ideal of $L$. Our hypothesis
implies that no cycle in $E\backslash(H,S)$ has an exit and that $n$ is also
the upper bound for the number of distinct paths ending at any vertex in
$E\backslash(H,S)$. So $E\backslash(H,S)$ contains no infinite irrational
paths. This means that every path ends at a sink or at a cycle without exits.
Also, as $I(H,S)$ is a graded prime ideal, Theorem 3.12 of \cite{R} implies
that $(E\backslash(H,S))^{0}$ is downward directed. Consequently,
$E\backslash(H,S)$ contains either (a) exactly one sink $w$ or (b) exactly one
cycle $c$ without exits based at a vertex $v$. Now in case (a), there are no
more than $n$ distinct paths ending at $w$ and, in  case (b), there are no
more than $n$ paths which end at $v$ and do not go through the cycle $c$. We
then appeal to Corollary 2.6.5 and Lemma 2.7.1 of \cite{AArS} to conclude that
$L/P\cong L_{K}(E\backslash(H,S))\cong M_{t}(K)$ or $M_{t}(K[x,x^{-1}])$
according as $E\backslash(H,S)$ contains a sink or a cycle without exits. This
proves (iii).

Assume (iii). Now, for any graded prime ideal $P$, $L/P\cong_{gr}M_{t}(K)$ or
$M_{t}(K[x,x^{-1}])$ with appropriate matrix gradings where $t\leq n$, a fixed
positive integer. It is known that the intersection $I$ of all graded prime
ideals of $L$ is zero. For the sake of completeness, we shall outline the
argument. If $I\neq0$, being a graded ideal, $I$ the will contain vertices.
But, given any vertex $v$, a graded ideal $M$ maximal among graded ideals with
respect to $v\notin M$ is a graded prime ideal, because, for any two
homogeneous elements $a,b$, if $a\notin M$ and $b\notin M$, then $v\in M+LaL$
and $v\in M+LbL$. Consequently, $v=v^{2}\in(M+LaL)(M+LbL)=M+LaLbL$. Since
$v\notin M$, $aLb\nsubseteqq M$. Thus $M$ is a graded prime ideal. But then
$v\notin M$ implies $v\notin I$, a contradiction. Thus $\cap\{P:P$ graded
prime ideal$\}=0$. Consequently, $L$ is a graded subdirect product of the
graded rings $L/P$ graded isomorphic to $M_{t}(K)$ or $M_{t}(K[x,x^{-1}])$
under appropriate matrix gradings, where $t\leq n$, a fixed positive integer.
This proves (iv).

Assume (iv). If $t\leq n$, then the matrix rings $M_{t}(K)$ and $M_{t}%
(K[x,x^{-1}])$ will each have index of nilpotence $\leq n$. Consequently, a
subdirect product of such rings will also have nilpotence index $\leq n$. This
proves (i).

Assume (iv). By the Amitsur-Levitzky theorem \cite{P}, both $M_{t}(K)$ and
$M_{t}(K[x,x^{-1}])$ with $t\leq n$ are polynomial identity rings satisfying a
polynomial identity of degree $\leq2n$. From this, it is clear that the
subdirect product $L$ also satisfies a polynomial identity of degree $\leq n$.
This proves (v).

The implication (v) $\implies$ (ii) has been established in \cite{BLR}.
\end{proof}

\noindent The Leavitt path algebra in Theorem \ref{bdd Index} need not
decompose as a direct sum of matrix rings, as the following example shows.

\begin{example}
\textrm{\label{InfiniteClock} Consider the following \textquotedblleft
infinite clock" graph $E$:%
\[%
\begin{array}
[c]{ccccc}
&  & \bullet_{w_{1}} &  & \bullet_{w_{2}}\\
& \ddots & \uparrow & \nearrow & \\
& \cdots & \bullet_{v} & \longrightarrow & \bullet_{w_{3}}\\
& \swarrow & \vdots & \ddots & \\
_{w_{n}} &  &  &  &
\end{array}
\]
\noindent Thus $E^{0}=\{v\}\cup\{w_{1},w_{2},\cdot\cdot\cdot,w_{n},\cdot
\cdot\cdot\}$ where the $w_{i}$ are all sinks. For each $n\geq1$, let $e_{n}$
denote the single edge connecting $v$ to $w_{n}$. The graph $E$ is acyclic and
so every ideal of $L$ is graded (\cite{HR}). The number of distinct paths
ending at any given sink \ (including the sink) is $\leq2$. For each $n\geq1$,
$H_{n}=\{w_{i}:i\neq n\}$ is a hereditary saturated set, $B_{H_{n}}=\{v\}$ and
($E\backslash(H_{n},B_{H_{n}}))^{0}=\{v,w_{n}\}$ is downward directed. Also E%
%TCIMACRO{\TEXTsymbol{\backslash}}%
%BeginExpansion
$\backslash$%
%EndExpansion
(H$_{n}$,B$_{H_{n}}$) trivially satisfies Condition (L). Hence the ideal
$P_{n}$ generated by $H_{n}\cup\{v-e_{n}e_{n}^{\ast}\}$ is a\ graded primitive
ideal by Theorem 4.3(iii) of \cite{R} and $L_{K}(E)/P\cong M_{2}(K)$.
Moreover, every graded primitive (equivalently, prime) ideal $P$ of $L_{K}(E)$
is equal to $P_{n}$ for some $n$. By \cite[Theorem 4.12]{HRS-1}, $L_{K}(E)$ is
a graded $\Sigma$-$V$ ring. }

\textrm{But $L_{K}(E)$ cannot decompose as a direct sum of the matrix rings
$M_{2}(K) $. Because, otherwise, $v$ would lie in a direct sum of finitely
many copies of $M_{2}(K)$. Since the ideal generated by $v$ is $L_{K}(E)$,
$L_{K}(E)$ will then be a direct sum of finitely many copies of $M_{2}(K)$.
This is impossible since $L_{K}(E)$ contains an infinite set of orthogonal
idempotents $\{e_{n}e_{n}^{\ast}:n\geq1\}$. }

\textrm{We can also describe the internal structure of this ring $L_{K}(E)$.
The socle $S$ of $L_{K}(E)$ is the ideal generated by the sinks $\{w_{i}%
:i\geq1\}$, $S\cong\bigoplus\limits_{\aleph_{0}}M_{2}(K)$ and $L_{K}(E)/S\cong
K$. }
\end{example}

But the decomposition is possible if the graph is row-finite, as shown in the
following theorem.

\begin{theorem}
\label{row-finite bddIndex}Let $E$ be a row-finite graph. Then the following
properties are equivalent for $L:=L_{K}(E)$:

\begin{enumerate}
[(i)]

\item $L$ has bounded index of nilpotence $\leq n$;

\item There is a fixed positive integer $n$ and a graded isomorphism%

\[
L\cong_{gr}{\displaystyle\bigoplus\limits_{i\in I}}M_{n_{i}}(K)\oplus
{\displaystyle\bigoplus\limits_{j\in J}}M_{n_{j}}(K[x,x^{-1}])
\]
where $I,J$ are arbitrary index sets and, for all $i\in I$ and $j\in J$,
$n_{i}\leq n$ and $n_{j}\leq n$ . In particular, $L$ is graded semi-simple
(that is, a direct sum of graded simple left/right ideals).
\end{enumerate}
\end{theorem}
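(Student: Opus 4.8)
The plan is to use the structure theory already assembled: by Theorem \ref{bdd Index}, condition (i) is equivalent to the graphical condition (ii), namely that no cycle in $E$ has an exit and that there is a uniform bound $n$ on the number of distinct paths ending at any vertex (with the usual caveat about cycles). So it suffices to show that for a \emph{row-finite} graph $E$ satisfying (ii), one obtains the explicit graded direct-sum decomposition in (ii), and conversely that such a decomposition forces bounded index of nilpotence $\le n$. The converse is immediate: each $M_{n_i}(K)$ and $M_{n_j}(K[x,x^{-1}])$ with $n_i,n_j\le n$ has index of nilpotence $\le n$ (nilpotent matrices of size $\le n$ satisfy $x^n=0$, and $K[x,x^{-1}]$ is a domain so $M_{n_j}(K[x,x^{-1}])$ has the same bound), and a direct sum of rings of index $\le n$ again has index $\le n$.

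For the forward direction, the idea is to separate $E$ into its acyclic part and its ``cyclic'' part. Since no cycle has an exit and $E$ is row-finite, the cycles without exits are, as graphs, disjoint loops that absorb everything flowing into them; the remaining vertices either feed into such a cycle or terminate at a sink. First I would invoke the known description of Leavitt path algebras of acyclic and ``no-exit'' graphs: when $E$ is row-finite with no cycle having an exit, each connected ``terminal component'' is either a sink $w$ with finitely many paths ($\le n$) ending at it — whose corresponding ideal is, by Corollary 2.6.5 of \cite{AArS}, a matrix ring $M_{n_w}(K)$ with $n_w\le n$ — or a cycle $c$ without exits with $\le n$ paths ending at the base not traversing $c$, whose ideal is, by Lemma 2.7.1 of \cite{AArS}, $M_{n_c}(K[x,x^{-1}])$ with $n_c\le n$, each with its natural matrix grading. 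Second, I would show that $L$ is the direct sum (internal, hence graded since all these ideals are graded ideals) of these pieces: because $E$ is row-finite and every vertex reaches a sink or a no-exit cycle (no infinite irrational or non-terminating rational paths are possible once the path bound $n$ is in force), the hereditary saturated closures of the distinct terminal components partition $E^0$, and their sum of corresponding ideals is all of $L$ while the sum is direct since the terminal components are pairwise disjoint and mutually unreachable. The ``in particular'' clause — graded semisimplicity — then follows because each $M_{n_i}(K)$ and $M_{n_j}(K[x,x^{-1}])$, with its standard grading, is graded simple as a ring and is a finite direct sum of graded simple one-sided ideals (columns), so $L$ is a direct sum of graded simple left/right ideals.

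The main obstacle is the bookkeeping in the second step: making precise that row-finiteness plus the uniform path bound guarantees that \emph{every} vertex lies in the hereditary saturated closure of exactly one terminal component (a sink or a no-exit cycle), with these closures mutually orthogonal as ideals, so that the internal direct sum exhausts $L$ with no overlap. For a finite graph this is the standard layering argument; for an infinite row-finite graph one must check that the union of the terminal closures is saturated and hereditary and equals $E^0$ — equivalently that there is no ``escaping'' infinite path — which is exactly where the hypothesis that no cycle has an exit together with finiteness of $s^{-1}(v)$ for every $v$ is used (an infinite path would have to be irrational, contradicting the bound on paths ending at each vertex via an argument as in the proof of Theorem \ref{bdd Index}). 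Once that partition is in hand, identifying each summand via the cited Corollary 2.6.5 and Lemma 2.7.1 of \cite{AArS} and observing that the gradings match is routine.
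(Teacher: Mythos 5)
Your proposal follows essentially the same route as the paper: both directions use Theorem \ref{bdd Index} to reduce to the graphical condition, identify the ideals coming from sinks and no-exit cycles via Corollary 2.6.5 and Lemma 2.7.1 of \cite{AArS}, and then argue that this ideal is all of $L$ by a saturation argument using row-finiteness and the path bound; the converse direction is the same observation about matrix rings of bounded size. One imprecision to fix: it is not true that the hereditary saturated closures of the individual terminal components partition $E^0$, nor that every vertex lies in the closure of exactly one such component --- a vertex emitting edges into two different sinks lies in neither individual closure. What is actually needed (and what the paper proves, by induction on the length of the longest path from a vertex, using that every non-sink vertex is regular) is that the saturated closure of the \emph{union} of all sinks and vertices on no-exit cycles is all of $E^0$; the directness of the sum of the corresponding ideals is already part of the cited structural results (or follows from the pairwise disjointness of the individual closures), so your argument goes through once restated this way.
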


\begin{proof}
Assume (i). By Theorem \ref{bdd Index}, no cycle in $E$ has an exit and the
number of distinct paths that end at any vertex $v$ is $\leq n$, with the
proviso that if $v$ sits on a cycle $c$, then these paths do not include the
entire cycle $c$. If $A$ is the graded ideal generated by all the sinks in $E$
and all the vertices on cycles without exits, then, by Corollary 2.6.5 and
Lemma 2.7.1 of \cite{AArS}, $A\cong{\displaystyle\bigoplus\limits_{i\in I}%
}M_{n_{i}}(K)\oplus{\displaystyle\bigoplus\limits_{j\in J}}M_{n_{j}%
}(K[x,x^{-1}])$. By giving appropriate matrix gradings, this isomorphism
becomes a graded isomorphism. We claim that $L=A$. Let $H\subseteq A$ be the
set consisting of all the sinks and all the vertices on cycles in $E$. By
hypothesis, every path in $E$ that does not include an entire cycle has length
$\leq n$ and ends at a vertex in $H$. So if $u$ is any vertex in $E$, using
the fact that all the vertices in $E$ are regular and by a simple induction on
the length of the longest path from $u$, we can conclude that $u$ belongs to
the saturated closure of $H$. This implies that $L=A\cong_{gr}%
{\displaystyle\bigoplus\limits_{i\in I}}M_{n_{i}}(K)\oplus
{\displaystyle\bigoplus\limits_{j\in J}}M_{n_{j}}(K[x,x^{-1}])$. This proves (ii).

(ii) $\implies$ (i) follows from the fact that the matrix rings $M_{n_{i}}(K)
$ and $M_{n_{j}}(K[x,x^{-1}])$ with $n_{i},n_{j}\leq n$ have index of
nilpotence $\leq n$.
\end{proof}

One consequence of Theorem \ref{bdd Index} is the following.

\begin{prop}
\label{bdx => SigmaV} Let $E$ be an arbitrary graph. If $L:=L_{K}(E)$ has
bounded index of nilpotence $n$, then $L$ is a graded $\Sigma$-$V$ ring.
\end{prop}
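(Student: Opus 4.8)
The plan is to reduce the statement to the already-known structure theorem for Leavitt path algebras with bounded index of nilpotence (Theorem~\ref{bdd Index}) and then transfer the graded $\Sigma$-$V$ property from the quotients $L/P$ to $L$ itself. By Theorem~\ref{bdd Index}, the hypothesis that $L$ has bounded index of nilpotence $n$ is equivalent to condition (iii): for every graded prime ideal $P$ of $L$ we have $L/P\cong_{gr}M_{t}(K)$ or $M_{t}(K[x,x^{-1}])$ with $t\le n$, under appropriate matrix gradings. Each of these rings is graded von Neumann regular (in the case of $M_t(K[x,x^{-1}])$ one uses that $K[x,x^{-1}]$ is graded a field, being graded-simple with the standard $\mathbb Z$-grading), hence each $L/P$ is a graded $\Sigma$-$V$ ring: over a graded von Neumann regular ring every graded module is graded flat and, since these are moreover graded Noetherian (finite matrices over a graded field), every graded simple module is graded injective, and the $\Sigma$-injectivity follows because a direct sum of copies of a graded simple module over such a ring remains graded injective. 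So the first step is: record that each graded prime quotient $L/P$ is graded $\Sigma$-$V$.

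The second step is to pass from the quotients to $L$. Here I would argue directly with graded simple $L$-modules. Let $M$ be a graded simple left $L$-module; I claim $\mathrm{Ann}_L(M)$ is a graded prime ideal $P$, so that $M$ is a graded simple module over the graded $\Sigma$-$V$ ring $L/P$. That $P$ is graded is immediate since $M$ is graded simple; that it is prime follows because, as shown inside the proof of (iii)$\implies$(iv) in Theorem~\ref{bdd Index}, a graded ideal maximal with respect to excluding a given vertex is graded prime, and $P$ is contained in such an ideal while still being the annihilator of a faithful simple $L/P$-module — more cleanly, $M$ faithful simple over $L/P$ forces $L/P$ to be graded primitive, hence graded prime. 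Thus $M$ is graded $\Sigma$-injective as an $L/P$-module. Finally one lifts graded injectivity along the quotient map $L\to L/P$: since $\mathrm{Ann}_L(M)=P$, for any graded $L$-module $N$ with graded submodule $N'$, a graded homomorphism $N'\to M$ is an $L/P$-homomorphism from $N'/PN'$ (inside $N/PN$ when $P$ annihilates the relevant modules) — more carefully, one uses that graded injectivity of a module annihilated by $P$ over $R=L$ is equivalent to graded injectivity over $R/P=L/P$, a standard change-of-rings fact for graded rings — and the same for arbitrary direct sums $M^{(\Lambda)}$, giving graded $\Sigma$-injectivity of $M$ over $L$. Carrying out the same argument on the right yields that $L$ is a graded $\Sigma$-$V$ ring.

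An alternative, and perhaps cleaner, second step uses the graded subdirect product decomposition (iv) of Theorem~\ref{bdd Index}: $L$ embeds as a graded subring of $\prod_{i}A_i$ with each $A_i$ graded $\Sigma$-$V$, and every graded simple $L$-module arises as a graded simple $A_i$-module for exactly one $i$ via the projection; graded $\Sigma$-injectivity over $A_i$ then upgrades to graded $\Sigma$-injectivity over $L$ by the change-of-rings fact above, since $L$ acts on such a module through its image in $A_i$. I would present whichever of these is shorter once the change-of-rings lemma for graded injectivity is pinned down.

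The main obstacle I anticipate is exactly that change-of-rings step: verifying that graded injectivity (and graded $\Sigma$-injectivity) of a $P$-annihilated graded $L$-module is detected over $L/P$ rather than only over $L$. For ungraded rings this is classical (an $R/I$-module is injective over $R/I$ iff it is injective over $R$ when... actually the correct statement needs $I$ to act as zero, and then $R$-injectivity implies $R/I$-injectivity, with the converse requiring an $R/I$-injective envelope argument), and the graded analogue requires citing the graded version of Baer's criterion from \cite{NV} or \cite{HR} together with the observation that graded left ideals of $L/P$ pull back to graded left ideals of $L$. The $\Sigma$ part is then automatic since the class of $P$-annihilated modules is closed under arbitrary direct sums and the equivalence is functorial in the module. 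I do not expect any difficulty with the Leavitt-path-algebra specifics here, since all the graph-theoretic work is already packaged inside Theorem~\ref{bdd Index}; the content of this proposition is purely the transfer of a module-theoretic property through a subdirect decomposition into well-understood pieces.
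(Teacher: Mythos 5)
Your first step matches the paper: use Theorem~\ref{bdd Index}(iii) to identify each graded primitive (hence graded prime) quotient $L/P$ with $M_t(K)$ or $M_t(K[x,x^{-1}])$, $t\le n$, and note these are graded $\Sigma$-$V$. The gap is exactly the step you flag as the ``main obstacle'' and then do not resolve: the claim that graded injectivity of a $P$-annihilated module over $L/P$ is \emph{equivalent} to graded injectivity over $L$ is false for general (graded) rings, and the direction you need is the one that fails. The standard change-of-rings fact only goes the other way: an $R$-injective module killed by $I$ is $R/I$-injective. For the converse, $\mathbb{Z}/p\mathbb{Z}$ is injective over $\mathbb{Z}/p\mathbb{Z}$ but not over $\mathbb{Z}$; equivalently, $\mathbb{Z}$ is a subdirect product of the $\Sigma$-$V$ rings $\mathbb{Z}/p\mathbb{Z}$ yet is not a $V$-ring, which also defeats your ``alternative, cleaner'' subdirect-product route (a subdirect decomposition into $\Sigma$-$V$ factors does not by itself transfer the property upward; moreover, a graded simple $L$-module need not factor through one of the projections of a subdirect product, though in your first route this is not an issue since its annihilator is itself graded primitive). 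Citing the graded Baer criterion plus ``graded left ideals of $L/P$ pull back'' does not close this: given a graded left ideal $J\subseteq L$ and a graded map $f\colon J\to M$, nothing forces $f$ to vanish on $J\cap P$, so $f$ need not factor through $(J+P)/P$.

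The paper closes precisely this gap by citation: \cite[Theorem 4.12]{HRS-1} is a theorem specifically about Leavitt path algebras which characterizes the graded $\Sigma$-$V$ property in terms of the graded primitive quotients, so the lifting from $L/P$ to $L$ is the nontrivial content of that result, not a routine change of rings. If you want to argue it directly instead, you need an extra structural input: $L$ is graded von Neumann regular \cite{H}, and for a homogeneous $a\in J\cap P$ one can write $a=axa$ with $x$ homogeneous, whence $f(a)=axf(a)\in PM=0$; then $f$ does factor through $(J+P)/P$ and the graded Baer criterion (in a form valid for rings with local units, since $L$ may be non-unital) yields graded injectivity over $L$, with the $\Sigma$ part following since $P$-annihilated modules are closed under direct sums. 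Either supply that regularity-based argument carefully (including the local-unit version of Baer's criterion) or quote \cite[Theorem 4.12]{HRS-1} as the paper does; as written, the proposal's transfer step is unsupported and, stated as an equivalence for general graded rings, false.
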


\begin{proof}
If $L$ has bounded index of nilpotence $n$, then for any graded primitive
ideal $P$ of $L$ (since it is also graded prime), we have, by Theorem
\ref{bdd Index}(iii), $L/P\cong_{gr}M_{t}(K)$ or $M_{t}(K[x,x^{-1}])$ with
appropriate matrix gradings, where $t\leq n$. By \cite[Theorem 4.12]{HRS-1},
we then conclude that $L$ is a graded $\Sigma$-$V$ ring.
\end{proof}

The converse of the above result does not hold, as can be seen in the two
examples below.

\begin{example}
\textrm{\label{Inverse infinite clock} Let $E$ be the \textquotedblleft
inverse infinite clock" graph consisting of a sink $w$ and countably infinite
edges $\{e_{n}:n\geq1\}$ with $r(e_{n})=w$ and s(e$_{n}$) = w}$_{n}$ \textrm{
for all $n$. }

\textrm{%
\[%
\begin{array}
[c]{ccccc}
&  & \bullet_{w_{1}} &  & \bullet_{w_{2}}\\
& \ddots & \downarrow & \swarrow & \\
& \cdots & \bullet_{w} & \longleftarrow & \bullet_{w_{3}}\\
& \nearrow & \vdots & \ddots & \\
\bullet_{w_{n}} &  &  &  &
\end{array}
\]
} \textrm{\noindent Then $L_{K}(E)\cong M_{\infty}(K)$, the infinite
$\omega\times\omega$ matrix with finitely many non-zero entries. Now, under
appropriate matrix grading, $M_{\infty}(K)$ is graded semisimple (that is, a
graded direct sum of \ graded simple modules) and so all graded left/right
$M_{\infty}(K)$-modules are graded injective and hence $L$ is a graded
$\Sigma$-$V$ ring. But $L$ does not have bounded index of nilpotence by
Theorem \ref{bdd Index}(ii). }
\end{example}

\begin{example}
\textrm{\label{bdd idx no sigmaV} Consider the following graph $F$ consisting
of two cycles $g$ and $c$ connected by an edge:
\[%
\begin{array}
[c]{ccccccc}%
\bullet & \longrightarrow & \bullet &  & \bullet & \longrightarrow & \bullet\\
\uparrow & g & \downarrow &  & \uparrow & c & \downarrow\\
\bullet & \longleftarrow & \bullet & \longrightarrow & \bullet_{v} &
\longleftarrow & \bullet
\end{array}
\]
} \textrm{\noindent Now $F$ is downward directed, $c$ is a cycle without exits
and the various powers of the cycle $g$ give rise to infinitely many distinct
paths that end at the base $v$ of the cycle $c$. Hence $L_{K}(F)\cong
M_{\infty}(K[x,x^{-1}])$ (by Lemma 2.7.1 of \cite{AArS}) and is graded
semisimple. Hence each graded simple module over $L_{K}(F)$ is graded $\Sigma
$-injective. But $L_{K}(F)$ does not have bounded index of nilpotence, as
$M_{\infty}(K[x,x^{-1}])$ contains subrings isomorphic to $M_{n}(K[x,x^{-1}])$
for every positive integer $n$. }
\end{example}

In the monograph \cite{JST}, the following open question (6.33: Problem 3,
Chapter 6) was raised:

\begin{question}
Does every exchange right $\Sigma$-$V$ ring have bounded index of nilpotence?
\end{question}

We answer this question in the negative in the following remark.

\begin{remark}
\textrm{Consider the graph $E$ of Example \ref{Inverse infinite clock}. As
noted there, $L_{K}(E)\cong M_{\infty}(K)$ $\mathrm{\ }$which is semisimple
and hence is a $\Sigma$-$V$ ring. Since $E$ is acyclic, $L_{K}(E)$ is von
Neumann regular \cite{AR} and hence is an exchange ring. But $L_{K}(E)$ does
not have bounded index of nilpotence by Theorem \ref{bdd Index}(ii). }
\end{remark}

\begin{remark}
\textrm{It was shown in \cite{HRS-1} that a Leavitt path algebra $L_{K}(E)$ is
directly-finite (equivalently, graded directly-finite with respect to vertices) if and only if no cycle in $E$ has an exit. In view of Theorem
\ref{bdd Index}, it is clear that a Leavitt path algebra of bounded index is
always directly-finite. But, for arbitrary rings with bounded index of
nilpotence, this is not the case. Let $S=\prod R_{k}$, where each $R_{k}%
\cong\mathbb{Z}(p^{n})$, the ring of integers modulo a fixed integer $n\geq2$.
Now $(pS)^{n}=0$ and if $a\in S$ is nilpotent, then $a\in pS$ and $a^{n}=0$.
Consequently, $S$ has bounded index of nilpotence. In fact, the index of
nilpotence of $S$ is $n$. But $S$ is not directly-finite, since $S\cong%
\prod\limits_{k\geq2}R_{k}\cong\prod\limits_{k\geq3}R_{k}\cong\cdot\cdot\cdot
$. }

\textrm{Conversely, a directly-finite Leavitt path algebra need not have
bounded index of nilpotence. If $E$ is the graph consisting of an infinite
line segment
\[
\bullet\longrightarrow\bullet\longrightarrow\bullet\longrightarrow\cdot
\cdot\cdot\bullet\longrightarrow\cdot\cdot\cdot
\]
then clearly $L_{K}(E)$ is directly-finite, but, by Theorem \ref{bdd Index}
(ii), $L_{K}(E)\cong M_{\infty}(K)$ does not have bounded index of nilpotence.
}
\end{remark}

\bigskip

\bigskip

\bigskip

\end{document}